\def\eps{\varepsilon}
\def\N{\bbN}
\def\be{\begin{equation}}
	\def\ee{\end{equation}}
		\def\bfig{\begin{figure}[htb]}
			\def\efig{\end{figure}}
		\numberwithin{equation}{section}
		\newtheorem{theorem}{Theorem}[section]
		\newtheorem{proposition}[theorem]{Proposition}
		\newtheorem{lemma}[theorem]{Lemma}
		\newcommand{\comment}[1]{}
		\DeclareMathSymbol{\leqslant}{\mathalpha}{AMSa}{"36}
		\DeclareMathSymbol{\geqslant}{\mathalpha}{AMSa}{"3E}
		\DeclareMathSymbol{\doteqdot}{\mathalpha}{AMSa}{"2B}
		\DeclareMathSymbol{\circlearrowright}{\mathalpha}{AMSa}{"08}
		\DeclareMathSymbol{\subsetneq}{\mathalpha}{AMSb}{"28}
		\DeclareMathSymbol{\supsetneq}{\mathalpha}{AMSb}{"29}
		\renewcommand{\leq}{\;\leqslant\;}
		\renewcommand{\geq}{\;\geqslant\;}
		\newcommand{\e}[1]{\,{\rm e}^{#1}\,}
		\newcommand{\upchi}{\raise 2pt \hbox{$\chi$}}
		\def\writefig#1 #2 #3 {\rlap{\kern #1 truecm \raise #2 truecm
				\hbox{#3}}}
		\newcommand{\caC}{{\mathcal C}}
		\newcommand{\caL}{{\mathcal L}}
		\def\bbone{{\mathchoice {\rm 1\mskip-4mu l} {\rm 1\mskip-4mu l} {\rm 1\mskip-4.5mu l} {\rm 1\mskip-5mu l}}}
		\newcommand{\bbE}{{\mathbb E}}
		\newcommand{\bbN}{{\mathbb N}}
		\newcommand{\bbP}{{\mathbb P}}
		\newcommand{\bbR}{{\mathbb R}}
		\newcommand{\bse}{{\boldsymbol e}}
		\newcommand{\R}{\mathbb{R}} % reelle Zahlen
		\renewcommand{\epsilon}{\varepsilon}
		\newcommand{\E}{\mathbb{E}}
		\renewcommand{\P}{\mathbb{P}}
		\renewcommand{\L}{\mathcal{L}}
		\newcommand{\numb}[1]{\mathrm{N}_{#1}}
		\newcommand{\uproman}[1]{\uppercase\expandafter{\romannumeral#1}}
		\lstdefinestyle{pseudo}{
			backgroundcolor=\color{white},   % no background color
			basicstyle=\ttfamily,            % typewriter font
			frame=leftline,                  % vertical line on the left
			rulecolor=\color{black},         % line color
			numbers=none,                    % no line numbers
			keywordstyle=\bfseries\color{blue}, % keywords in bold and blue
			commentstyle=\itshape\color{green}, % comments in italic and green
			morekeywords={If, Else, if, For, all, Do, While, Return, Skip}, % keywords to highlight
			xleftmargin=0.5em,                 % left margin
			breaklines=true,                 % break lines if too long
			literate={:}{{\textcolor{blue}{:}}}1, % colorize colons
			showspaces=false,
			showstringspaces=false,
			tabsize=1,                      % smaller indent size
			escapeinside={(*@}{@*)}, 
		}
		\title{Loop percolation versus link percolation in the random loop model}
		\author{Volker Betz}
		\address{Volker Betz \hfill\newline
			\indent FB Mathematik, TU Darmstadt}
		\email{betz@mathematik.tu-darmstadt.de}
		\author{Andreas Klippel}
		\address{Andreas Klippel \hfill\newline
			\indent FB Mathematik, TU Darmstadt}
		\email{andreas.klippel@math.tu-darmstadt.de}
		\author{Mino Nicola Kraft}
		\address{Mino Nicola Kraft \hfill\newline
			\indent FB Mathematik, TU Darmstadt}
		\email{mino\_nicola.kraft@tu-darmstadt.de}
		\date{\today}
\begin{document}
			
			\maketitle
			
			\begin{abstract}
				In \cite{Muelbacher2021}, Peter M\"uhlbacher showed that in the random loop model without loop weights, 
				a loop phase transition (assuming it exists) cannot occur at the same parameter as the percolation 
				phase transition of the occupied edges. In this work, we give a quantitative version of this result, specifying a minimal 
				gap between the percolation phase transition and a possible loop phase transition. A substantial part of our argument also works
				for weighted loop models. 		
			\end{abstract}

			\section{Introduction and main results}\label{introduction}
			Random loop models arise as graphical representations of various quantum spin systems, such as the quantum Heisenberg 
			(anti-) ferromagnet. These connections were first observed in \cite{Aizenman1994,Tth1993} and then extended in \cite{Ueltschi2013}. 
			We refer the reader to these works for the connections to quantum systems and will exclusively treat the probabilistic versions here. 
			
			Given a finite graph, $ G=( V,E)$, a {\em link configuration} is a finite sequence $c = (c_i)_{i \in[n]} = 
			(e_i, s_i)_{i \in[n]}$ with $n \in \mathbb N$, $e_i \in E$ and $s_i \in \{-1,1\}$ for all $i \in[n]$\footnote{We use the notation $[n]\coloneqq\{1,\dots,n\}$ and $[n]_0\coloneqq [n]\cup\{0\}$ for $n\in\N$ throughout.}. 
			$s_i = 1$ corresponds to a 'cross' and $s_i = -1$ to a 'double bar' as described in  \cite{Ueltschi2013}. One element of the sequence is called a {\em link}. 
			
			A link configuration $c = (e_i, s_i)_{i \in[n]}$ gives rise to a loop configuration. The most common way to describe this is 
			through a graphical construction, see e.g.\  \cite{Ueltschi2013}. Here we provide an alternative, equivalent definition. 
			We define the loop configuration $\caL(c)$ induced by the link configuration $c = (c_i)_{i \in[n]} = (e_i, s_i)_{i \in[n]}$ 
			as a partition of the
			set $X = V \times [n]$. This partition is given by the equivalence classes arising from an equivalence relation $\sim$ which we define as follows: 
			for $(v,j), (w,k) \in X$, we say that $(v,j) \sim (w,k)$ if $v=w$ and $j=k$, or if one of the following conditions is satisfied:
			\[
			\begin{split} 
			(i) & \qquad v = w, k = (j+1) \text{ mod } n, \text{ and } v \notin e_j, \\
			(ii) & \qquad \{v,w\} \in E, j=k, e_j = \{v,w\}, \text{ and } s_j = -1, \\
			(iii) & \qquad \{v,w\} \in E, j=k, e_{(j-1) \text{ mod } n} = \{v,w\}, \text{ and } s_j = -1, \\
			(iv) & \qquad \{v,w\} \in E, k = (j+1) \text{ mod } n, e_j = \{v,w\}, \text{ and } s_j = 1.
			\end{split}
			\]
			We extend $\sim$ first by symmetry, and then by transitivity, to an equivalence relation.  
			$\caL(c)$ denotes the set of equivalence classes of the resulting equivalence relation. The connection to
			the graphical construction appears when interpreting $(v,j)$ as the interval $((j-1)/n, j/n)$ on a real axis attached to $v$: then (i)
			tells us that intervals on adjacent levels on the same vertex with no intervening links are connected; (ii) and (iii) say 
			that double bars connect intervals of the same level at both vertices of the edge where they appear; 
			(iv) means that intervals of adjacent levels on neighbouring vertices are connected if 
			a cross is placed on their shared edge.  See Figure \ref{fig:LoopConstruction}. 	
			
			\begin{figure}[t]
				\centering		
					\begin{tikzpicture}[xscale=0.8,yscale=0.9]
					\draw[->, black] (1,0) -- (9,0) node[right] {$ G$};
					\draw[->, black] (1,0) -- (1,3) node[above] {$i$};	\foreach \x in {1,2,3,4}{		\draw[black] (2*\x,0) -- (2*\x,-0.3) node[below] {$v_{\x}$};
						\draw[black, line width=1pt] (2*\x,0) -- (2*\x,3);
						
					}
					\foreach \x in {1,2,3}{\draw[black] (2*\x+1,0) -- (2*\x+1,0) node[below]{$e_{\x}$};}
					\draw[red, line width=1pt] (2,0.45*0) -- (2,0.45*6-0.06);
					\draw[red, line width=1pt] (2,0.45*6+0.06) -- (2,3);
					\draw[red, line width=1pt] (4,0) -- (4,0.45*1-0.06);
					\draw[red, line width=1pt] (4,0.45*1+0.06) -- (4,0.45*3-0.06);
					\draw[red, line width=1pt] (4,0.45*4+0.06) -- (4,0.45*6-0.06);
					\draw[red, line width=1pt] (4,0.45*6+0.06) -- (4,3);
					\draw[red, line width=1pt] (6,0) -- (6,0.45*1-0.06);
					\draw[red, line width=1pt] (6,0.45*1+0.06) -- (6,0.45*2-0.06);
					\draw[red, line width=1pt] (6,0.45*2+0.06) -- (6,0.45*3-0.06);
					\draw[red, line width=1pt] (6,0.45*4+0.06) -- (6,0.45*5-0.06);
					\draw[red, line width=1pt] (6,0.45*5+0.06) -- (6,3);
					\draw[red, line width=1pt] (8,0) -- (8,0.45*2-0.06);
					\draw[red, line width=1pt] (8,0.45*2+0.06) -- (8,0.45*5-0.06);
					\draw[red, line width=1pt] (8,0.45*5+0.06) -- (8,3);
					\draw[green, line width=1pt] (4,0.45*3+0.06) -- (4,0.45*4-0.06);
					\draw[green, line width=1pt] (6,0.45*3+0.06) -- (6,0.45*4-0.06);
					\foreach \x in {1,2,3,4,5,6}{		
						\draw[black] (1,0.45*\x) -- (1,0.45*\x) node[left] {$\x$};
					}
					\draw[black] (1,0) -- (1,0) node[left] {$0$};

					\draw[red, line width=1pt] (2,0.45*6-0.06) -- (4,0.45*6+0.06);
					\draw[red, line width=1pt] (2,0.45*6+0.06) -- (4,0.45*6-0.06);
					\draw[red, line width=1pt] (4,0.45*1-0.06) -- (6,0.45*1-0.06);
					\draw[red, line width=1pt] (4,0.45*1+0.06) -- (6,0.45*1+0.06);
					\draw[red, line width=1pt] (4,0.45*3-0.06) -- (6,0.45*3-0.06);
					\draw[green, line width=1pt] (4,0.45*3+0.06) -- (6,0.45*3+0.06);
					\draw[green, line width=1pt] (4,0.45*4-0.06) -- (6,0.45*4-0.06);
					\draw[red, line width=1pt] (4,0.45*4+0.06) -- (6,0.45*4+0.06);
					\draw[red, line width=1pt] (6,0.45*2-0.06) -- (8,0.45*2-0.06);
					\draw[red, line width=1pt] (6,0.45*2+0.06) -- (8,0.45*2+0.06);
					\draw[red, line width=1pt] (6,0.45*5-0.06) -- (8,0.45*5+0.06);
					\draw[red, line width=1pt] (6,0.45*5+0.06) -- (8,0.45*5-0.06);
					\end{tikzpicture}
%				\subcaption{Our link sequence is $((e_2,-1),(e_3,-1),(e_2,-1),(e_2,-1),(e_3,1),(e_1,1))$. Following the rules of the equivalence 
%				relation $\tilde$, we obtain two equivalence classes (loops), depicted in green and red. }
%				\end{subfigure}
			\caption{We consider the link configuration $((e_2,-1),(e_3,-1),(e_2,-1),(e_2,-1),(e_3,1),(e_1,1))$. Following the rules of the equivalence 
				relation, we obtain two equivalence classes (loops), coloured in green and red. }
			\label{fig:LoopConstruction}
			\end{figure} 
			
			The random loop measure on the graph $G = (V,E)$ is defined to be the probability measure $\bbP_{\beta,u,\theta}$ on 
			\[
			\caC(E) = \{(c_i)_{i \in[n]} = (e_i, s_i)_{i \in[n]}: n \in \bbN, e_i \in E ,\text{ and } s_i \in \{-1,1\} \text{ for all } i \in[n] \}
			\]
			with  
			\begin{align}\label{probweights1}
				\bbP_{\beta,u,\theta}(\{c\})\coloneqq\left(Z_{\beta,u,\theta}\right)^{-1}\frac{\beta^{|c|}}{|c|!}  
				u^{\frac12 \sum_{j=1}^n (1 + s_j)}(1-u)^{\frac12 \sum_{j=1}^n (1 - s_j) }\theta^{|\L(c)|},
			\end{align} 	
			where $|c|$ is the length of the vector $c \in \mathcal C( E)$ and $Z_{\beta,u,\theta}$ is the normalisation constant.

			A constructive way of thinking about the random loop measure is to imagine that we first draw the length $|c|$ of the 
			link configuration by evaluating a Poisson random variable with parameter $\beta |E|$. Then for each element $c_i$ of the vector, we 
			assign independently
			a first component $e_i$ using the uniform distribution on $E$, and a type $s_i$, choosing $s_i=1$ with probability $u$ and $s_i=-1$ with 
			probability $1-u$. \\
			Then the quantities 
			\[
			N_e(c) = | \{ i \in[|c|]: c_i = (e,s) \text { for some } s \in \{-1,1\} \} |
			\]
			are i.i.d. Poisson random variables with parameter $\beta$, and the order in which the elements appear in the sequence 
			is uniformly distributed on the set of all possible orderings. This 
			gives us the connection with the usual way of defining the model. 
			The resulting sequence has law $\bbP_{\beta,u,1}$. To obtain $\bbP_{\beta,u,\theta}$, a reweighting with the 
			weight function $\theta^{|\caL|}$ is necessary.  For $\theta > 1$, this favours configurations with more loops. 
			For integer $\theta$, there is a combinatorial interpretation of colouring each loop with one of 
			$\theta$ different colours.

			The main question about the random loop model concerns the existence of infinite loops, and is therefore a percolation-type question. 
			For a finite graph $G=(V,E)$ and a link configuration 
			$c \in \mathcal C(E)$, we say that $v,w \in V$ are {\em connected by a  loop}, and write $v \stackrel{c}\Longleftrightarrow w$, 
			if $(v,1)$ and $(w,1)$ are in the same equivalence class.\\ 
			For an infinite connected graph $ G = ( V, E)$, we say that infinite loops exist if we can find an increasing 
			sequence $(E_m)_{m\geq1} \in  E^\N$ and a vertex $v_0 \in  e$ for some $e\in E_1$ 
			such that: \\[1mm]
			(i) $\bigcup_{m \in \mathbb N}  E_m =  E$, and the subgraph $G_m=(V_m,E_m)$ of $G$ generated by $E_m$ is connected
			for all $m$. \\[1mm]
			(ii) Let $\mathbb P_{{\beta,u,\theta,m}}$ denote the random loop measure on $G_m$, and let $d(v,w)$ be the graph distance of two vertices $v$ and $w$. Then
			\begin{equation} \label{eq:loop percolation}
				\lim_{R \to \infty} \liminf_{m \to \infty} \mathbb P_{{\beta,u,\theta,m}}(v_0\overset{c}{\Longleftrightarrow} w \text{ for some } w \in V_m \text{ with } 
				d(v_0,w) \geq R) > 0.
			\end{equation}
		
			While it is usually rather easy to show the absence of infinite loops when $\beta$ is small 
			(see also below), positive results are much harder to obtain. 
			Two special graphs that are relatively well understood, are the complete graph and regular trees. For $\theta=1$ and $u=1$, the random loop model is often referred to as interchange process. On complete graphs, the existence of infinitely long loops for the interchange process follows from \cite{Schramm2005}, where it was shown that the re-scaled loop lengths of macroscopic loops converge weakly to the Poisson-Dirichlet distribution ${\rm PD}(1)$ above the critical threshold $\beta=1/2$. \cite{Bjoernberg2019} extended this to $u\in[0,1)$, yielding convergence to the Poisson-Dirichlet distribution ${\rm PD}(1/2)$. A more detailed analysis of the expected number of loops of a certain size can be found in \cite{Berestycki2015}.\\
			On $d$-regular trees, the existence of infinitely long loops for the interchange process has been shown for $d\geq 5$ in \cite{Angel2003} and sharpness of the phase transition has first been proven for $d\geq 764$ in \cite{Hammond2014} and in \cite{Betz2021}, this was extended to $d\geq 3$. Both works, as well as \cite{BjornbergUeltschi2018b,Bjoernberg2018}, contribute an asymptotic expansion in $d$. Here, \cite{Betz2021} allows for $u\in[0,1]$ and \cite{Bjoernberg2018} allows for $\theta\neq 1$. An interesting result for $d$-regular graphs, of which $d$-regular trees are a subclass, was proven in \cite{Poudevigne2022}. There it was shown for $\theta\in(0,\infty)$ that macroscopic cycles appear almost surely if one draws the graph uniformly at random among all $d$-regular graphs. Another study of randomly drawn graphs was carried out in \cite{BetzEtAl2018} where Galton-Watson trees with certain conditions on the offspring distribution were considered.\\
			Two specific graphs that also have been studied, are the hypercube and the Hamming graph. For the first, the existence of long loops was shown in \cite{Kotecky2016} and for the latter \cite{Adamczak2021} extends the results of \cite{Schramm2005} also allowing for $\theta\neq 1$ but leaving the question of criticality open.\\
			Graphs with more complex geometries are notoriously difficult to treat. An important recent success is \cite{elboim2024}, 
			where the existence of loop percolation (in a slightly different sense) is shown in dimensions 5 or higher for the cubic lattice in case $\theta=1$, $u=1$. Further rigorous results on the existence of long loops and quantitative bounds on connection probabilities were recently obtained in \cite{Betz2025}, using refined versions of reflection positivity.\\
			
			In this work, we do not present any results on regimes where \eqref{eq:loop percolation} is valid. 
			Instead, we contribute to the understanding of the 
			region where \eqref{eq:loop percolation} does not hold by comparing the existence of infinite loops and infinite clusters in a percolation model that is called link percolation. 
			For given $c = (e_i, s_i)_{i \in[n]} \in \caC(E)$, we define $\eta(c) \in \{0,1\}^E$ by setting $\eta_e(c) = 1$ if and only if there exists 
			$i \in[n]$ with $e_i =e$. In words, edges are open if at least one link is placed on them.
			The law of $c \mapsto \eta(c)$ under $\bbP_{\beta,u,1}$  is just Bernoulli percolation with probability $1 - \e{-\beta}$ for an open edge, 
			while under the general measure
			$\bbP_{\beta,u,\theta}$ a model of dependent percolation emerges. 
			We write $v \stackrel{c}{\longleftrightarrow} w$ if $v$ and $w$ are in the
			same $\eta(c)$-percolation cluster. As above, we say that there are infinite clusters in link percolation if we can find  
			an increasing 
			sequence $(E_m)_{m\geq1} \in  E^{\N}$ and a vertex $v_0 \in e$ for some $e\in E_1$ such that:\\[1mm]
			(i) $\bigcup_{m \in \mathbb N}  E_m =  E$, and the subgraph $G_m=(V_m,E_m)$ of $G$ generated by $E_m$ is connected
			for all $m$. \\[1mm]
			(ii) Let $\mathbb P_{\beta,u,\theta,m}$ denote the random loop measure on $G_m$, and let $d(v,w)$ be the graph distance of vertices $v$ and $w$. Then, 
			\begin{equation} \label{eq:link percolation}
				\lim_{R \to \infty} \liminf_{m \to \infty} \mathbb P_{{\beta,u,\theta,m}}(v_0 \overset{c}{\longleftrightarrow} w \text{ for some } w \in  V_m \text{ with } 
				d(v_0,w) \geq R) > 0.
			\end{equation}
			It is clear that when \eqref{eq:link percolation} fails to hold, then \eqref{eq:loop percolation} also does not hold. In \cite{Muelbacher2021},
			Peter M\"uhlbacher showed that for graphs of bounded degree, there exists an open interval of parameters where \eqref{eq:link percolation} holds
			but \eqref{eq:loop percolation} does not. 
			The purpose of this article is to give a quantitative version of M\"uhlbacher's result, and at the same time to streamline the 
			proof in several ways. A related result on trees was recently established in \cite{Klippel2025}, where a strict inequality of critical parameters for loop and link percolation is proved for a broad class of random trees including Galton–Watson trees.

			The main idea for comparing link percolation to loop percolation (i.e., existence of infinite loops) is to find sufficiently many edges
			that contribute to the former but not to the latter. For $c \in \caC(E)$, let $N_e(c)$ be as defined above; that is, $N_e(c)$ denotes the number of times the edge $e$ appears in the sequence $(e_i, s_i)_{i\in[n]}$.  
			We say that an edge $e$ is {\em blocking} for a link configuration $c = (e_i, s_i)_{i \in[n]}$ if 
			\\[2mm]
			\begin{minipage}{0.6 \textwidth}
				\begin{enumerate} 
					\item $N_e(c)= 2$,
					\item $s_i = 1$ if $e_i = e$, i.e., both links on $e$ are crosses, and
					\item no adjacent edges carry any links that go between the two links on $e$, i.e.,  if $e_i = e_j = e$ with $i < j$, 
					then $\bigcup_{i < k < j} e_k \cap e = \emptyset$. 
				\end{enumerate}
			\end{minipage}
			\begin{minipage}{0.38 \textwidth}
				\begin{figure}[H]
					\centering		
					\begin{tikzpicture}[xscale=0.5,yscale=0.9]
						\draw[->, black] (1,0) -- (9,0) node[right] {$ G$};
						\draw[->, black] (1,0) -- (1,3) node[above] {$i$};	\foreach \x in {1,2,3,4}{		\draw[black] (2*\x,0) -- (2*\x,-0.3) node[below] {$v_{\x}$};
							\draw[black, line width=1.5pt] (2*\x,0) -- (2*\x,1.2);
							\draw[black, line width=1.5pt] (2*\x,1.88) -- (2*\x,3);
							
						}
						\draw[black, line width=1.5pt] (2,0) -- (2,3);
						\draw[black, line width=1.5pt] (8,0) -- (8,3);
						\foreach \x in {2,4,6,8}{		
							\draw[black] (1,0.3*\x) -- (1,0.3*\x) node[left] {$\x$};
						}
						\draw[black] (1,0) -- (1,0) node[left] {$0$};

						\draw[green, line width=1.5pt] (4,1.2) -- (6,1.28);
						\draw[red, line width=1.5pt] (4,1.28) -- (6,1.2);
						\draw[green, line width=1.5pt] (4,1.88) -- (6,1.8);
						\draw[red, line width=1.5pt] (4,1.8) -- (6,1.88);
						\draw[red, line width=1.5pt] (4,1.28) -- (4,1.8);
						\draw[red, line width=1.5pt] (6,1.2) -- (6,1.08);
						\draw[red, line width=1.5pt] (6,1.88) -- (6,2);
						\draw[green, line width=1.5pt] (4,1.2) -- (4,1.08);
						\draw[green, line width=1.5pt] (4,1.88) -- (4,2);
						\draw[green, line width=1.5pt] (6,1.28) -- (6,1.8);
						\fill[fill=gray!20] (2.02,0) rectangle (3.98,1.08);
						\fill[fill=gray!20] (6.02,0) rectangle (7.98,1.08);
						\fill[fill=gray!20] (2.02,2) rectangle (3.98,3);
						\fill[fill=gray!20] (6.02,2) rectangle (7.98,3);
					\end{tikzpicture}
				\end{figure}
				
			\end{minipage} 
			
			In the illustration
			above, the edge $\{v_2,v_3\}$ is blocking if and only if the fifth element of the link configuration $c$ is not on one of the adjacent edges. 
			If $e = \{v,w\}$ is a blocking edge, a loop that uses one of the links on $e$ to travel from $v$ to $w$, or vice versa, 
			is not diverted from its current position (because of (3)) before it reaches the other link. Therefore, both links can be deleted from 
			$c$ without affecting whether $x \stackrel{c}\Longleftrightarrow y$ for any vertices $x,y \in V$. So, if we define $B_e(c) = 1$
			if $e$ is blocking for $c$, and $B_e(c) = 0$ otherwise, then a sufficient condition for the absence of infinite loops 
			is that the dependent percolation model 
			\[
			c \mapsto (\eta_e(c) (1 - B_e(c)))_{e \in E}
			\]
			has no infinite clusters (again, in the sense of exhausting the graph $G$ with finite approximations). This means that our aim is to find 
			a regime of parameters where the (in general, dependent) percolation model $(\eta_e)_{e \in E}$ has infinite clusters, but 
			$(\eta_e (1 - B_e))_{e \in E}$ has none. Our tool to do this is stochastic domination. We equip the space  $\Omega_E=\{0,1\}^ E$ with 
			the natural partial order $\leq$ given by the entry-wise comparison. 
			A function $f:\Omega_E \to \bbR$ is increasing if $\omega \leq \omega'$ implies $f(\omega) \leq f(\omega')$, and $A \subset \Omega_E$ is
			increasing if its indicator function is an increasing function. For two probability measures $\mu,\nu$ on $\Omega_E$, 
			$\mu$ stochastically dominates $\nu$ if for every increasing function $f$, we have $\nu(f) \leq \mu(f)$. 
									
			Our main result is for general $\theta > 1$. Write 
			\begin{align}
				\hat \theta = \max \{\theta,~1/\theta\},\label{eq:thetahatcheck}
			\end{align} 
			and let $O(c)$ be the set of open edges with respect to $\eta(c)$, i.e., the set of $e \in E$ where $\eta_e(c) = 1$.
			
			\begin{theorem}[Appearance of blocking events]\label{appearanceblocking}
				Let $G=( V, E)$ be a finite graph with bounded edge degree $K$. Define $\bbP_{\beta,u,\theta}$ as in \eqref{probweights1}. 
				Then for any $E_0 \subset E$, the law of $(B_e)_{e \in E_0}$ under 
				$\bbP_{\beta,u,\theta}(\cdot|~O=E_0)$ stochastically dominates a Bernoulli
				edge percolation measure on $E_0$ with parameter 
				\[
				\delta(\beta,u,\theta)\coloneqq\left(\frac{\hat \theta (8K-4)(2K-1)}{\beta u^2}+1\right)^{-1}\e{-\beta^+(2K-2)}.
				\]
			\end{theorem} 
			The idea of using blocking edges for comparison to percolation is taken from \cite{Muelbacher2021}. What is new is the extension to $\theta \neq 1$ and the quantitative bound. For example, we consider the three dimensional lattice ($K=6$), only crosses ($u=1$), $\theta=2$ and $\beta=0.25$\footnote{We use that simulations (see \cite{Wang2013}) indicate the critical threshold to be $\approx 0.25$.}. For these parameters, we get $\delta(\beta,u,\theta)\approx 2.12\cdot10^{-5}$.\\	
			
Theorem \ref{appearanceblocking} allows us to take a percolation cluster of the link percolation model $(\eta_e)_{e \in E}$ and to remove each edge of this 
cluster independently with probability $\delta(\beta,u,\theta)$. If we can find a regime of parameters where the link percolation cluster $\eta$ percolates,
but the 'thinned out' cluster no longer does, we know that for these parameters link percolation occurs, but loop percolation does not. Unfortunately, this
argument requires some control on the law of $(\eta_e)_{e \in E}$ itself near the percolation threshold. The only case where we currently have this control
is the 'trivial' one where $\theta = 1$ and thus $\eta$ is just Bernoulli bond percolation with parameter $1 - e^{-\beta}$. Let 
$p_{\rm c}(G)$ be the critical probability for the graph $G$ with respect to Bernoulli bond percolation. 

\begin{theorem}[Comparison to percolation]\label{comparisonimproved}
				Let $ G$ be an infinite graph with maximal degree $K \in \bbN$. Assume that $\theta = 
				1$, and define the existence of loop percolation and link percolation as in \eqref{eq:loop percolation} 
				and \eqref{eq:link percolation}, 
				respectively. Then for all $\beta > 0$, such that 
				\[
				p_{\rm c}(G) < 1 - \e{-\beta} < \frac{p_{\rm c}( G)}{1 - \delta(\beta,u,1)},
				\]
				loop percolation does not occur, but link percolation does. 
			\end{theorem}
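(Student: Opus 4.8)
The plan is to reduce loop percolation to a subcritical Bernoulli percolation in two moves: first show that long loops force long open paths made only of non-blocking edges, and then show that these non-blocking open edges are stochastically dominated by a Bernoulli percolation with density $(1-\e{-\beta})(1-\delta(\beta,u,1))$. For the first move, I would use that a blocking edge $e=\{a,b\}$ can only create a \emph{dead-end detour} of a loop: with two crosses on $e$ and, by definition of blocking, no link on any adjacent edge between them, the strand entering $a$ from below returns to $a$ above after a single excursion onto $b$, and the absence of adjacent links forbids this excursion from leaving $b$. Hence, if $H_m$ denotes the subgraph of $\fgraph_m$ formed by the open non-blocking edges (those with $\numb{e}\ge1$ and $\block{}_e=0$), every vertex visited by the loop through $v_0$ lies within graph distance $1$ of the $H_m$-cluster of $v_0$, so that
\[
v_0\overset{c}{\Longleftrightarrow} w,\ d(v_0,w)\ge R
\ \Longrightarrow\
\text{the }H_m\text{-cluster of }v_0\text{ reaches distance }\ge R-1 .
\]

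Next I would establish the domination. Put $p:=1-\e{-\beta}$, $\delta:=\delta(\beta,u,1)$ and let $K$ be the maximal degree of $\fgraph$. When $\theta=1$ the indicators $(\mathbb{1}[\numb{e}\ge1])_e$ are i.i.d.\ Bernoulli$(p)$. Conditioning on the open set being exactly $\tilde\edges$, Theorem \ref{appearanceblocking} shows that $(\block{}_e)_{e\in\tilde\edges}$ dominates Bernoulli$(\delta)$, equivalently $(1-\block{}_e)_{e\in\tilde\edges}$ is dominated by Bernoulli$(1-\delta)$. Averaging over the open set, and using that the open field is an independent product together with the monotone coupling from Theorem \ref{appearanceblocking}, a routine thinning argument gives that the edge field of $H_m$, namely $(\mathbb{1}[\numb{e}\ge1]\,(1-\block{}_e))_e$, is stochastically dominated by Bernoulli$(p(1-\delta))$ on $\fgraph_m$, with the same density for every $m$.

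To conclude, write $q:=p(1-\delta)=(1-\e{-\beta})(1-\delta(\beta,u,1))$ and let $\mathbb P_q$ denote Bernoulli$(q)$ bond percolation on $\fgraph$. By hypothesis $q<p_{\rm c}(\fgraph)$, so $\mathbb P_q$ is subcritical; since the events $\{v_0\text{ reaches distance}\ge R\}$ decrease, as $R\to\infty$, to the event that $v_0$ lies in an infinite cluster, continuity from above gives $\mathbb P_q(v_0\text{ reaches distance}\ge R)\to0$. Combining the reduction of Step~1 with the domination of Step~2 and the monotonicity $\fgraph_m\subseteq\fgraph$, for every $m$
\[
\mathbb P_m\big(v_0\overset{c}{\Longleftrightarrow} w\text{ for some }w,\ d(v_0,w)\ge R\big)\le \mathbb P_q\big(v_0\text{ reaches distance}\ge R-1\big).
\]
The right-hand side does not depend on $m$ and tends to $0$ as $R\to\infty$, so taking $\liminf_{m}$ and then $\liminf_R$ shows that \eqref{eq:loop percolation} fails.

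The main obstacle is the geometric reduction of Step~1: one has to check, directly from the loop construction in Appendix~\ref{App:algorithm}, that blocking edges contribute only bounded-range dead ends and can never be chained to carry a loop to large distance — this is precisely what the no-adjacent-link clause in the definition of a blocking edge secures, and the resulting shift by $1$ in the distance is harmless. Step~2 is where $\theta=1$ enters decisively: the independence of the open field is what allows the conditional domination of Theorem \ref{appearanceblocking} to be composed with the Bernoulli law of the open edges into a genuine Bernoulli bound. For $\theta\neq1$ the open field is itself a dependent percolation and this thinning step has no obvious analogue, which is exactly the limitation noted before the statement.
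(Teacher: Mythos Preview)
Your proposal is correct and follows essentially the same approach as the paper: condition on the open set $\tilde\edges$, apply Theorem~\ref{appearanceblocking} to dominate the non-blocking field by a Bernoulli$(1-\delta)$ thinning, average to obtain domination by Bernoulli$(p(1-\delta))$, and conclude via subcriticality. The paper is terser on Step~1 (it simply asserts, without your distance-$1$ slack, that long loops force an infinite $\nblock{}$-cluster) and writes out Step~2 as an explicit chain of inequalities over increasing events, but the substance is identical.
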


\begin{proof}
Since $(\eta_e)_{e \in E}$ is Bernoulli bond percolation with parameter $1 - e^{-\beta}$, 
link percolation occurs for $\beta$ with $p_{\rm c}(G) < 1 - \e{-\beta}$ by the definition of $p_{\rm c}(G)$.
For the proof that loop percolation does not occur when 
$1 - \e{-\beta} < \frac{p_{\rm c}( G)}{1 - \delta(\beta,u,1)}$, we use Theorem \ref{appearanceblocking}. 

As discussed in the paragraph before Theorem \ref{appearanceblocking}, for a link configuration $c$, 
$v \stackrel{c}\Longleftrightarrow w$ is only possible if 
an open path from $v$ to $w$ exists in the edge percolation configuration $(\eta_e(c) (1 - B_e(c)))_{e\in E}$. Our task is therefore to show that this percolation model
does not exhibit infinite clusters. To do so, we dominate it by a Bernoulli percolation model: let $G = (V,E)$ be a finite graph and 
$f: \{0,1\}^E \to \bbR^+$ be an increasing function. For $E_0 \subset E$ and $\xi \in \{0,1\}^E$, let $\xi|_{E_0}(e) = \xi(e) \bbone_{\{e \in E_0\}}$, 
and define $f_{E_0}(\xi) = f(\xi|_{E_0})$. Note that $f_{E_0}$ is increasing as well.
%Note that $f_{E_0}(\xi) \leq f(\xi)$ for all $\xi \in \{0,1\}^E$. 
We abbreviate 
$f(\eta) = f((\eta_e)_{e \in E})$ and analogously $f(\eta (1-B))$. Recall that $O(c)$ is the set of open edges with respect to $\eta(c)$, let $c$ 
be distributed according to $\bbP_{\beta,u,\theta}$ (with $\theta$ general for now), which also fixes the distributions of $\eta$ and $B$. Let $X$ be a Bernoulli 
edge percolation with probability $1 - \delta(\beta, u, \theta)$ for an edge to be open. We write $\bbE$ for the expectation with respect to the 
product measure of $\bbP_{\beta,u,\theta}$ and the probability measure that is associated with $X$, and get 
\[
\begin{split}
\bbE(f(\eta (1 - B))) & = \sum_{E_0 \subset E} \bbE (f(\eta(1-B)) \, | \, O = E_0) \bbP(O = E_0)  \\
& = \sum_{E_0 \subset E} \bbE (f_{E_0}(1-B) \, | \, O = E_0) \bbP(O = E_0) \\
& \leq \sum_{E_0 \subset E} \bbE (f_{E_0}(X) \, | \, O = E_0) \bbP(O = E_0) \\ 
& = \sum_{E_0 \subset E} \bbE (f(\eta X) \, | \, O = E_0) \bbP(O = E_0) = \bbE( f(\eta X)).
\end{split}
\]
Now comes the place where we need to assume $\theta = 1$: in this case, we know that 
$\eta X$ is a Bernoulli percolation with parameter $(1 - \e{-\beta}) (1 - \delta(\beta, u, 1)$, and therefore we can bound connection probabilities 
for $\eta (1-B)$ by connection probabilities with respect to this Bernoulli percolation. This shows the claim.  
\end{proof}

\section{Proof of Theorem \ref{appearanceblocking}}
			
For this section, we fix $E_0 \subset E$ and write $\tilde \bbP(\cdot) = \tilde \bbP_{\beta,u,\theta}(\cdot) = \bbP_{\beta,u,\theta}( \cdot \, | \, O = E_0)$, which we view as a probability measure on 
$\caC(E_0)$. Note that under $\tilde \bbP$, we have $N_e \geq 1$ almost surely for all $e \in E_0$. We also just write $\delta$ instead of 
$\delta(\beta,u,\theta)$, $E$ instead of $E_0$, and $\caC$ instead of $\caC(E)$ below. 
The strategy of the proof is to show a finite energy property for the percolation measure $(B_e)_{e \in E}$, 
which means that for all $e_0 \in E$ and for all 
$\bar \epsilon \in \{0,1\}^{E\backslash\{e_0\}}$, we have 
\begin{equation} 
\label{eq: uniformDom}
\tilde \bbP(B_{e_0} = 1 \, | \, (B_{e'})_{e' \in E \setminus \{e_0\}} = \bar \epsilon) \geq \delta.
\end{equation}
Then the claim follows by essentially known arguments, which we spell out in Proposition \ref{stochasticdominancecomp} for the convenience of the 
reader. 

The following equality will be used many times below: given $c \in \{O = E\}$, let $c_+(e,j,s)$ be the link configuration where one link of type $s$ is 
inserted into the sequence $c$, at position $j \leq |c|+1$ and on edge $e$. Then 
\begin{equation} \label{c equality}
\tilde \bbP(c_+(e,j,s)) = \tilde \bbP(c) \frac{\beta}{|c|+1} \theta^{|\caL(c_+(e,j,s))| - |\caL(c)|} u^{(1+s)/2} (1-u)^{(1-s)/2}.
\end{equation}
Note that $||\caL(c_+(e,j,s))| - |\caL(c)|| \leq 1$, which gives immediate upper and lower bounds. 

Let us start by a naive try for proving \eqref{eq: uniformDom} which will fail. The uniform domination would hold if we could show 
$\tilde \bbP(B_{e_0}(c) = 1 \, | \, c|_{E \setminus \{e_0\}} = \bar c) \geq \delta$ for all 
$\bar c \in \caC(E \setminus \{e_0\})$, because then we can sum over all $\bar c$ leading to a specific $\bar \epsilon$. 
But this inequality is not true. The reason is that placing 
too many links on an edge adjacent to $e_0$ makes it difficult for $e_0$ to be blocking: to see this, let $E = \{e_0, e'\}$ so that $e_0 \cap e' = \{v\}$. Assume that $\bar c$ is the configuration that has $m$ links on the edge $e'$. For $e_0$ to be blocking, 
we need to place the two links so that none of the links on $e'$ lie between the two links on $e_0$. In other words, if 
$c \in \{N_{e_0} = 1\}$ and the link on $e_0$ is at position $j$, then the additional link that we need to place to get to $\{B_{e_0} = 1\}$ has to be either 
right before or right after that link. Both cases result in the same link configuration. This means that for every such $c \in \{N_{e_0} = 1\}$, there is just one $c \in \{B_{e_0} = 1\}$ that uses the same 
position $j$. As a consequence, \eqref{c equality} gives 
\[
\frac{\tilde \bbP(B_{e_0} = 1 \, | \, c|_{E \setminus \{e_0\}} = \bar c)} {\tilde \bbP(N_{e_0} = 1 \, | \, c|_{E \setminus \{e_0\}} = \bar c)} \leq 
\frac{\hat \theta\beta u^2}{m+1},
\]
which shows that uniform domination must fail since $m$ can be made arbitrarily large.  

So we need to condition on an event coarse enough to prevent this obstruction from happening, but fine enough to be able to estimate conditional 
expectations. For this purpose, for $e\in E$, we introduce 
\[
A_e(k,m)\coloneqq\{f\in E:~ d(e,f)\in[k,m]\},
\]
where $d(e,f)$ is the edge graph distance, i.e., the minimal number of vertices that need to be crossed on a path from the midpoint of $e$ to the midpoint 
of $f$. Also, we set $A(k,m)\coloneqq A_{e_0}(k,m)$. For $\bar c \in \caC(A(2,\infty))$ and $\bar \epsilon \in \{0,1\}^{A(1,2)}$, we define 
\[
\caC_{\bar c, \bar \epsilon} = \{ c \in \caC: c |_{A(2,\infty)} = \bar c, \,  B_e = \bar \eps_{e} \text{ for all } e \in A(1,2) \}.
\]
In words, $\caC_{\bar c, \bar \epsilon}$ consists of sequences that match $\bar c$ for edges with distance two or more from $e_0$, 
and have the correct blocking structure for all edges in $A(1,\infty)$; we only need to demand the latter for edges in $A(1,2)$, since for all other edges it follows from the 
knowledge of $\bar c|_{A(2,\infty)}$. Note that $\caC_{\bar c, \bar \epsilon}$ can be empty in cases where no placement of links on some 
$e \in A(1,1)$ is able to produce the required blocking structure on $A(1,2)$. 

Elements of $\caC_{\bar c, \bar \epsilon}$ can have an arbitrary number of links on the edges of $A(1,1)$, as long as these 
do not interfere with the desired blocking structure. We wish to restrict the number of these links, and for this purpose we 
define a partial order on $\caC$: we say that $c \leq c'$ if $c$ emerges from $c'$ by removing
sequence elements but keeping the relative order of the remaining elements. Then $\caC_{\bar c, \bar \epsilon}$ and $\caC_{\bar c, \bar \epsilon}\cap\{c\in\caC:~B_{e_0}=1\}$ contain minimal elements, and we write 
$\caC_{\bar c, \bar \epsilon}^{\rm min}$ for the union of the two sets of minimal elements. Let us remark already now that for 
$c \in \caC_{\bar c, \bar \epsilon}^{\rm min}$, each edge $\tilde e \in A(1,1)$ has at most $\min\{2, K - 1\}$ links. The reason is that, on the one hand, 
if $\bar \eps_{\tilde e} = 1$, 
there must be precisely two links on $\tilde e$. On the other hand, there are 
at most $K-1$ edges adjacent to $\tilde e$ that are not contained in $A(1,1)$. Each of these edges may be designated as non-blocking, and this may or may not require a link 
on $\tilde e$ to destroy what would otherwise be a blocking structure. Any links beyond these required ones can then be removed. Note that, for any edge $e^\prime\in A(1,1)$ neighbouring $\tilde e$ such that two crosses are placed on $e^\prime$ and $\bar\epsilon_{e^\prime}=0$, there exists a link placed between these two crosses on some other edge neighbouring $e^\prime$: if there would be no such link between the two crosses that are placed on $e^\prime$, we could remove one of the two crosses on $e^\prime$ and still get an element from $\caC_{\bar c, \bar \eps}$ contradicting minimality. Hence, there is no need to place a link on $\tilde e$ to destroy the blocking structure on $e^\prime$ and the number of links in $c\in\caC_{\bar c, \bar \eps}^\text{min}$ on edges from $A(1,1)$ can indeed by bounded by $\min\{2,K-1\}$ since there are at most $K-1$ neighbouring edges to an edge from $A(1,1)$ that are not contained in $A(1,1)$.

For $c \in \caC_{\bar c, \bar \epsilon}^{\rm min}$, set 
\[
\mathrm{Ex}(c) = \{ \tilde c \in \caC_{\bar c, \bar \epsilon} : c \leq \tilde c \}.
\]
As a first step of the proof, we show 

\begin{lemma} \label{c vs Ex(c)}
For each $c \in \caC_{\bar c, \bar \epsilon}^{\rm min}$, we have 
\begin{equation}
\label{Cmin inequality}
\tilde \bbP (\mathrm{Ex}(c)) \leq \e{\beta^+ (2K-2)} \tilde \bbP(\{c\}),
\end{equation}
where 
$
\beta^+\coloneqq\hat\theta\beta$ and $\hat \theta = \max\{\theta, \theta^{-1}\}$.
\end{lemma}

\begin{proof}
Let us re-introduce the notation $\tilde\P_{\beta,u,\theta} = \tilde \bbP$ for now. We start by a known estimate that relates parameters $\theta$ and $1$, see e.g.\ \cite{Bjoernberg2018}. 
For $c \in \caC_{\bar c, \bar \epsilon}^{\rm min}$, we have  
\begin{align*}
	\frac{\tilde\P_{\beta,u,\theta}(\mathrm{Ex}(c))}{\tilde\P_{\beta,u,\theta}(\{c\})}
	&=\sum_{\tilde c\in\mathrm{Ex}(c)}
	\frac{\beta^{|\tilde c|-|c|}|c|!}{|\tilde c|!}\theta^{|\mathcal{L}(\tilde c)|-|\mathcal{L}(c)|}
	u^{\numb{1}(\tilde c)-\numb{1}(c)}(1-u)^{\numb{-1}(\tilde c)-\numb{-1}(c)}\\
	& \leq \sum_{\tilde c\in\mathrm{Ex}(c)}\frac{(\beta\hat\theta)^{|\tilde c|-|c|}|c|!}{|\tilde c|!}
	u^{\numb{1}(\tilde c)-\numb{1}(c)}(1-u)^{\numb{-1}(\tilde c)-\numb{-1}(c)}\\
	&= \frac{\tilde\P_{\beta^+,u,1}(\mathrm{Ex}(c))}{\tilde\P_{\beta^+,u,1}(\{c\})}
\end{align*}
From now on, we will work with $\tilde \bbP_{\beta^+,u,1}$ which we abbreviate as $\tilde \bbP_1$. As discussed above, 
$c \in \caC_{\bar c, \bar \epsilon}^{\rm min}$ can not be extended on edges $e \in A(1,1)$ with $\bar \eps_e = 1$
without leaving $\caC_{\bar c, \bar \epsilon}$. So $\mathrm{Ex}(c)$ arises from $c$ by adding links to the edges where 
$\bar \epsilon_e = 0$. Assume that there are $J$ of such edges. 
We construct elements of ${\rm Ex}(c)$ by stepwise adding crosses and double 
bars to each of the $J$ edges, so in total there will be $2J$ steps. We add crosses in odd steps and double bars in even ones. 
Let $m_i$ be the number of links that we added in the $i$-th step. In the $j$-the step, there are thus 
$|c| + \sum_{k=1}^{j-1} m_k + 1$ positions in the extended sequence in which to place the links. Since they are all of the same type, and 
on the same edge, the order in which we place them is irrelevant, and we get 
\[
\begin{split}
& \frac{\tilde \bbP_1(\mathrm{Ex}(c))}{\tilde \bbP_1(\{c\})}  \\
& = \frac{|c|!}{(\beta^+)^{|c|}} \sum_{m_1, \ldots, m_{2J} = 0}^\infty 
\frac{(\beta^+)^{|c| + \sum_{k=1}^{2J} m_k}}{(|c| + \sum_{k=1}^{2J} m_k)!} \prod_{j=1}^{2J} u^{m_{2j-1}}(1-u)^{m_{2j}} 
\frac{\prod_{k=1}^{2J} (|c| + \sum_{j=1}^{k-1} m_j + 1)^{m_k}}{\prod_{k=1}^{2J} m_k!}
\end{split}
\]
Since 
\[
\frac{\prod_{k=1}^{2J} (|c| + \sum_{j=1}^{k-1} m_j + 1)^{m_k} |c|!}{(|c| + \sum_{k=1}^{2J} m_k)!} \leq 1,
\]
we obtain 
\[
\begin{split}
\frac{\tilde \bbP_1(\mathrm{Ex}(c))}{\tilde \bbP_1(\{c\})} & \leq  \sum_{m_1, \ldots, m_{2J} = 0}^\infty  
\prod_{k=1}^{2J} \frac{(\beta^+)^{m_k}}{m_k!}  \prod_{j=1}^{2J} u^{m_{2j-1}}(1-u)^{m_{2j}}  \\
& = \prod_{k=1}^J \sum_{m_{2k-1}=1}^\infty \frac{(\beta^+ u)^{m_{2k-1}}}{(m_{2k-1})!} \sum_{m_{2k}=1}^\infty \frac{(\beta^+ (1-u))^{m_{2k}}}{(m_{2k})!} 
= \prod_{k=1}^J  \e{\beta^+ u} \e{\beta^+ (1- u)}\\
& = \e{\beta^+ J}.
\end{split}
\]
The result now follows since $J \leq 2K-2$. 
\end{proof}

As an immediate consequence, we note that 
\begin{equation} \label{C Cmin estimate}
	\begin{aligned}
		\tilde \bbP(\caC_{\bar c, \bar \eps}) &= \tilde \bbP(\bigcup_{c \in \caC_{\bar c, \bar \eps}^{\text{min}}} \mathrm{Ex}(c)) \leq 
		\sum_{c \in \caC_{\bar c, \bar \eps}^{\text{min}}} \tilde \bbP(\mathrm{Ex}(c))\leq \e{\beta^+(2K-1)} 
		\sum_{c \in \caC_{\bar c, \bar \eps}^{\text{min}}} \tilde \bbP(\{c\}) \\
		&=  \e{\beta^+(2K-2)} 
		\tilde \bbP(\caC_{\bar c, \bar \eps}^{\text{min}}).
	\end{aligned}
\end{equation}

Next, we show 

\begin{lemma} \label{secondStepLemma}
For all $\bar c \in \caC(A(2,\infty))$ and all $\bar \eps \in \{0,1\}^{A(1,2)}$, we have  
\[
\tilde \bbP( \{B_{e_0} = 1 \} \cap \caC_{\bar c, \bar \eps}^{\rm{min}}) \geq \delta_0
\tilde \bbP(\caC_{\bar c, \bar \eps}^{\rm{min}}) 
\]
with $\delta_0\coloneqq \left(\frac{\hat \theta (8K-4)(2K-1)}{\beta u^2}+1\right)^{-1}$.
\end{lemma}
We start with a combinatorial consideration. For $e_0=\{v_0,w_0\}$, let $c\in \caC^{\rm min}_{\bar c, \bar \eps}|_{A(1,\infty)\setminus E_{{\rm block},v_0}}$,
\begin{align*}
	E_{{\rm block},v_0}&\coloneqq\{e\in E:~v_0\in e,~\bar\eps_{e}=1\},~\text{and}\quad
	Q(c)\coloneqq \{\tilde c\in \caC_{\bar c, \bar \epsilon}^{\rm min}|_{A(1,\infty)}:~c\leq \tilde c\}.
\end{align*}
In words, $Q(c)$ contains the link configurations obtained from $c$ by adding two crosses on each $e \in E_{{\rm block},v_0}$ in such a way that all blocking requirements are satisfied. Also, for $\tilde c \in Q(c)$, 
we have $|\tilde c| = |c| + 2 |  E_{{\rm block},v_0} |$. 
For $\tilde c=(\tilde e_i,\tilde s_i)_{i=1}^n \in Q(c)$ and $e\in E_{{\rm block},v_0}$, 
we let $i_e(\tilde c)$ be the position of the first 'blocking link' on $e$, and $j_e(\tilde c)$ the second position, i.e.,
\[
i_e(\tilde c) = \min\{i\in[|\tilde c|]:~\tilde e_i=e\}, \qquad 
	j_e(\tilde c) = \max\{i\in[|\tilde c|]:~\tilde e_i=e\}.
\]
Write $\nu_c$ for the uniform distribution on $Q(c)$. Then, we have
\begin{lemma}\label{lem-combinatoric}
	For all $c\in \caC^{\rm min}_{\bar c, \bar \eps}|_{A(1,\infty)\setminus E_{{\rm block},v_0}}$,
	\begin{align*}
		\E_{\nu_c}\left(\sum_{e\in E_{{\rm block},v_0}}j_e-i_e\right)\leq \frac{|E_{{\rm block},v_0}|}{2|E_{{\rm block},v_0}|+1}(|c|+2|E_{{\rm block},v_0}|+1).
	\end{align*}
\end{lemma}
\begin{proof} Let us write $n = |E_{{\rm block},v_0}|$. For given $\tilde c \in Q(c)$, we order the numbers $i_e(\tilde c)$ and $j_e(\tilde c)$, 
$e \in E_{{\rm block},v_0}$, by size, and write them as $i_1(\tilde c) < i_2(\tilde c) < \ldots i_n(\tilde c)$, and the same for the $j_k(\tilde c)$. 
Since all $e \in E_{{\rm block},v_0}$ share the vertex $v$, blocking intervals cannot overlap, and we obtain
\[
	0 =: j_0(\tilde c) < i_1(\tilde c) < j_1(\tilde c) < i_2(\tilde c) < \cdots < i_{n}(\tilde c) < j_n(\tilde c) < i_{n+1}(\tilde c) := |c| + 2n +1 = 
	|\tilde c| + 1.
\]
Write $I(\tilde c)=(i_k(\tilde c))_{k\leq n}$ and $J(\tilde c)=(j_k(\tilde c))_{k \leq n}$ for the ordered sequence of all initial, respectively final, blocking links, and write $e_k(\tilde c)$ for the unique element of $E_{{\rm block},v_0}$ on which the links with indices 
$i_k(\tilde c)$ and $j_k(\tilde c)$ are located.

Let us fix a sequence $\mathbf{J}=(\mathbf{j}_k)_{k \leq n}$ with $0 < \mathbf{j}_1 < \ldots \mathbf{j}_n < |c| + 2n + 1$, and an ordering 
$\bse = (\bse_1, \ldots \bse_n)$ of $E_{{\rm block},v_0}$. We set 
\[
Q_{\mathbf{J}, \bse}(c) = \{\tilde c \in Q(c):~ J(\tilde c) = \mathbf{J}, (e_1(\tilde c), \ldots, e_n(\tilde c)) = \bse\}.
\]
This may be empty, for example 
if one of the $\mathbf{j}_k$ appears right after the previous one, or right after a link in 
$c$ that sits on an edge neighbouring $\bse_k$. When the set is not empty, it always contains the element $\tilde c_0 \in Q(c)$ where 
$i_k(\tilde c_0) = j_k(\tilde c_0) - 1$. Indeed we can construct all elements of $Q_{\mathbf{J}, \bse}(c)$ by starting from this configuration 
and swapping the order of the $k$-th initial blocking link with its preceding link until the preceding link is either a link from $c$ on an edge 
neighbouring $e_k(\tilde c_0)$, or we have reached the position $\mathbf{j}_{k-1}$. From these considerations it is clear that given $J=\mathbf{J}$ and 
$(e_1, \ldots e_n) = \bse$, the $k$-th initial blocking link is distributed uniformly (under $\nu_c$) over an interval that is shorter or equal to 
$\mathbf{j}_k - \mathbf{j}_{k-1}$. This implies that 

	\begin{align*}
		\E_{\nu_c}(j_k-i_k|~Q_{\mathbf{J}, \bse}(c)) \leq \E_{\nu_c}(i_k-j_{k-1}|~Q_{\mathbf{J}, \bse}(c))
	\end{align*}
	and hence,
	\begin{align*}
		\E_{\nu_c}(j_k-i_k)\leq \E_{\nu_c}(i_k-j_{k-1}).
	\end{align*}
	With an analogous proof, we can show for all $k \leq n$
	\begin{align*}
		\E_{\nu_c}(j_k-i_k)\leq \E_{\nu_c}(i_{k+1}-j_{k}).
	\end{align*}
	Let $k_0 \leq n$ be a maximizer of $k\mapsto \E_{\nu_c}(i_{k}-j_{k-1})$. Then
	\begin{align*}
		\E_{\nu_c}\left(\sum_{k=1}^{n} j_k-i_k\right)&\leq \sum_{k=1}^{k_0-1}\E_{\nu_c}(i_k-j_{k-1})+\sum_{k=k_0+1}^{n+1}\E_{\nu}(i_k-j_{k-1})\\
		&\leq \frac{n}{n+1} \sum_{k=1}^{n+1}\E_{\nu_c}(i_k-j_{k-1}).
	\end{align*}
	Since $\sum_{k=1}^{n}(j_k-i_k)+\sum_{k=1}^{n+1}(i_k-j_{k-1})=|c|+2n+1$, we conclude
	\begin{align*}
		\E_{\nu_c}\left(\sum_{k=1}^{n}j_k-i_k\right)\leq \frac{n}{2n+1}(|c|+2n+1)
	\end{align*}
	which implies the claim.
\end{proof}
Let $E_{\rm block}\coloneqq \{e\in A(1,1):~\bar\epsilon_e=1\}$. Now, let $c\in \caC_{\bar c, \bar \eps}^{\rm min}|_{A(1,\infty)\backslash E_{\rm block}}$. We define
\begin{align}
	Q_1(c)& \coloneqq \{ \tilde c \in \{N_{e_0} = 1\} \cap  \caC_{\bar c, \bar \eps}^{\text{min}}:~ c \leq \tilde c,~\tilde c|_{\{e_0\}}=(e_0,1)\},\label{eq-Q_1}\\
	Q_2(c)& \coloneqq \{ \tilde c \in \{B_{e_0} = 1\} \cap  \caC_{\bar c, \bar \eps}^{\text{min}}:~ c \leq \tilde c\},\text{ and}\label{eq-Q_2}\\
	\tilde Q(c)&\coloneqq Q_1(c)|_{A(1,\infty)}=Q_2(c)|_{A(1,\infty)}.\notag
\end{align}
Moreover, let $\mu_c$ denote the uniform measure on $\tilde Q(c)$. Given a realisation $\tilde c=(\tilde e_i,\tilde s_i)_{i=1}^{\tilde n}$ of $\mu_c$, we want to add links to $\tilde c$ to get configurations from $Q_1(c)$ and $Q_2(c)$. We say that we add one cross at $k\in[\tilde n]$ or two crosses at $k,l\in[\tilde n]$ with $k<l$ to $\tilde c$ on $e_0$ if we construct the link configuration $c^\prime=(e^\prime_i,s^\prime_i)_{i=1}^{n^\prime}$ with $c^\prime|_{A(1,\infty)}=\tilde c$ and, for adding one cross, $e^\prime_k=e_0$ and $c^\prime|_{\{e_0\}}=((e_0,1))$ or respectively, for adding two crosses, $e^\prime_k=e^\prime_{l+1}=e_0$ and $c^\prime|_{\{e_0\}}=((e_0,1),(e_0,1))$. Then, there exist disjoint intervals $((a_m,b_m])_{m=1}^{m_0}$ with $a_m,b_m\in \N$ for all $m\in [m_0]$ such that adding two crosses to $\tilde c$ yields an element from $Q_2(c)$ if and only if the crosses are added at $k,l\in(a_m,b_m]\cap \N$ for some $m\in [m_0]$. Also, adding one cross yields an element from $Q_1(c)$ if and only if the cross is added at $k\in \N\cap \bigcup_{m\in[m_0]}(a_m,b_m]$. For all $m\in[m_0]$, we set $L_m\coloneqq b_m-a_m$. Note that we can consider $(a_m,b_m)_{m\in[m_0]}$ and $(L_m)_{m\in[m_0]}$ to be random variables w.r.t. $\mu_c$.
\begin{lemma}\label{lem-estimatefreespots}
	For all $c\in \caC_{\bar c, \bar \eps}^{\rm min}|_{A(1,\infty)\backslash E_{\rm block}}$,
	\begin{align*}
		\E_{\mu_c}\left(\sum_{m\in [m_0]}L_m\right)\geq \frac{|c|+1+|E_{\rm block}|}{|E_{\rm block}|+1}.
	\end{align*}
\end{lemma} 
\begin{proof}
	Let $\tilde c=(\tilde e_i,\tilde s_i)_{i=1}^{\tilde n}$ be a realisation of $\mu_c$. We note that $k\in\bigcup_{m\in [m_0]}(a_m(\tilde c),b_m(\tilde c)]\cap \N$ if and only if there does not exist some $e\in E_{{\rm block}}$ with $e=\tilde e_i=\tilde e_j$ and $i<k<j$. This gives
	\begin{align*}
		\sum_{m\in [m_0]}L_m(\tilde c)\geq |c|+2|E_{{\rm block}}|+1-\sum_{e\in E_{{\rm block}}}(j_e-i_e).
	\end{align*}
	Since all link configurations have the same probability under $\mu_c$, we can find some measure $\tilde \mu$ on $\tilde Q(c)|_{A(1,\infty)\setminus E_{{\rm block},v_0}}$ such that we get $\mu_c$ by first drawing some $\tilde c$ w.r.t. $\tilde \mu$ and then drawing a link configuration with $\nu_{\tilde c}$ from $Q(\tilde c)$. This works also when replacing $v_0$ with $w_0$ and hence, we get with Lemma \ref{lem-combinatoric}
	\begin{align*}
		\E_{\mu_c}\left(\sum_{m\in [m_0]}L_m\right)&\geq |c|+2|E_{{\rm block}}|+1\\
		&-\left(\frac{|E_{{\rm block},v_0}|}{2|E_{{\rm block},v_0}|+1}+\frac{|E_{{\rm block},w_0}|}{2|E_{{\rm block},w_0}|+1}\right)(|c|+2|E_{\rm block}|+1).
	\end{align*}
	Since, for $K>0$, on $\{(x,y)\in[0,\infty)^2:~x+y=K\}$, $$(x,y)\mapsto \frac{x}{2x+1}+\frac{y}{2y+1}$$ attains its maximum at $\left(K/2,K/2\right)$, we conclude the claim.
\end{proof}
\begin{proof}[Proof of Lemma \ref{secondStepLemma}]
Below, we will show the following equality and inequality for all $\bar c \in \caC(A(2,\infty))$ and $\bar \eps \in \{0,1\}^{A(1,2)}$ 
such that $\caC_{\bar c, \bar \eps} \neq \emptyset$: 
\begin{align}
\tilde \bbP((\{N_{e_0} = 1 \}\cup\{B_{e_0}=1\} )\cap \caC_{\bar c, \bar \eps}^{\text{min}} ) & =  \bbP(\caC_{\bar c, \bar \eps}^{\rm{min}}),  \label{eq1} \\
\tilde \bbP(\{ B_{e_0} = 1 \} \cap \caC_{\bar c, \bar \eps}^{\text{min}} ) & \geq d_0
\bbP(\{N_{e_0} = 1\} \cap \caC_{\bar c, \bar \eps}^{\text{min}} ) \label{ineq2} 
\end{align}
for $d_0\coloneqq \frac{\beta u^2}{\hat \theta (8K-4)(2K-1)}$. Once we have this, we obtain the claim with $\delta_0 = \frac{d_0}{1+d_0}$.

Equality \eqref{eq1} follows from an argument that we have already given in front of Lemma \ref{c vs Ex(c)} but we repeat here in the specific setting for the convenience of the reader: suppose there are at least two links placed on $e_0$ in $c\in \caC_{\bar c, \bar \eps}^{\text{min}}$ and $B_{e_0}(c)=0$. This only happens if removing either of these links yields a link configuration not contained in $\caC_{\bar c, \bar \eps}$ anymore. This again is only possible if there is some edge $e^\prime\in A(1,1)$ such that $c$ places two crosses on $e^\prime$ without any link on a neighbouring edge that is not $e_0$, being placed in-between the two crosses and $\bar\epsilon_{e^\prime}=0$. In this case, $c$ is not minimal since one of the two crosses on $e^\prime$ can be removed. Consequently, $c\in \caC^\text{min}_{\bar c, \bar \eps}$ and $B_{e_0}(c)=0$ already implies $N_{e_0}(c)=1$.\\
To show \eqref{ineq2}, we fix $c \in \caC_{\bar c, \bar \eps}^{\text{min}}|_{A(1,\infty)\backslash E_{\rm block}}$, i.e., all the links except the ones on $\{e_0\}\cup E_{\rm block}$ are fixed.
We write 
\[
q(c) = \frac{\beta^{|c|}}{|c|!} u^{\frac12 \sum_{j=1}^n (1 + s_j)}(1-u)^{\frac12 \sum_{j=1}^n (1 - s_j) } \theta^{|\caL(c)|} 
\]
for the weight of $c$ (but note that $\tilde \bbP(c) = 0$ since the condition of at least one link on $\{e_0\}\cup E_{\rm block}$ is not met). We remind the reader of the definitions \eqref{eq-Q_1} and \eqref{eq-Q_2}.\\ 
Since adding two links that form a blocking structure never changes the number of loops in the system, we have for all $\tilde c \in Q_2(c)$ that
\begin{align*}
	\tilde \bbP(\tilde c) &= \frac{\beta^{|c|+2+2|E_{\rm block}|}}{(|c|+2+2|E_{\rm block}|)!} u^{2+|E_{\rm block}|}u^{\frac12 \sum_{j=1}^n (1 + s_j)}(1-u)^{\frac12 \sum_{j=1}^n (1 - s_j) } \theta^{|\caL(c)|}\\
	 &= \frac{(u \beta)^{2+2|E_{\rm block}|}|c|!}{(|c|+2+2|E_{\rm block}|)!} q(c).	
\end{align*}
On the other hand, adding one link to $c$ on the edge $e_0$ changes the number of loops by exactly one, so that for all $\tilde c \in Q_1(c)$, we have 
\begin{align*}
	\tilde \bbP(\tilde c) + \tilde \bbP(\tilde c^\dagger) &\leq \frac{\beta^{|c|+1+2|E_{\rm block}|}}{(|c|+1+2|E_{\rm block}|)!}u^{2|E_{\rm block}|}u^{\frac12 \sum_{j=1}^n (1 + s_j)}(1-u)^{\frac12 \sum_{j=1}^n (1 - s_j) } \theta^{|\caL(c)|} \hat \theta\\
	& = 
	\frac{\beta u^{2|E_{\rm block}|} \hat \theta|c|!}{(|c|+1+2|E_{\rm block}|)!} q(c) 	
\end{align*}
where $\tilde c^\dagger$ is the link configuration where the link on $e_0$ is replaced by the opposite type compared to $\tilde c$. By summation over all $c \in \caC_{\bar c, \bar \eps}^{\text{min}}|_{A(1,\infty)\backslash E_{\rm block}}$, that is, 
\begin{align*}
	\tilde \bbP(\{ B_{e_0} = 1 \} \cap \caC_{\bar c, \bar \eps}^{\text{min}} )&=\sum_{c\in \caC_{\bar c, \bar \eps}^{\rm min}|_{A(1,\infty)\setminus E_{\rm block}}}\sum_{\tilde c\in Q_2(c)}\tilde \P(\{\tilde c\}),\text{ and}\\
	\tilde \bbP(\{ N_{e_0} = 1 \} \cap \caC_{\bar c, \bar \eps}^{\text{min}} )&=\sum_{c\in \caC_{\bar c, \bar \eps}^{\rm min}|_{A(1,\infty)\setminus E_{\rm block}}}\sum_{\tilde c\in Q_1(c)}(\tilde \P(\{\tilde c\})+\tilde \P(\{\tilde c^\dagger\})),
\end{align*}
 we can conclude \eqref{ineq2}, once we have established 
\begin{align}\label{combinatorics}
	\frac{\beta u^2|Q_2(c)|}{|c|+2+2|E_{\rm block}|}\geq d_0\hat \theta|Q_1(c)|.
\end{align}
We remind the reader of the definition of $(L_m)_{m=1}^{m_0}$ and of the way elements from $Q_1(c)$ and $Q_2(c)$ can be constructed starting with an element from $\tilde Q(c)$.\\
From now on, taking again $c\in \caC_{\bar c, \bar \eps}^\text{min}|_{A(1,\infty)\backslash E_{\rm block}}$, we denote by $\mu_c$ the uniform measure on $\tilde Q(c)$ as before. Then we get
\begin{align*}
	|Q_1(c)|&=|\tilde Q(c)|\E_{\mu_c}\left(\sum_{m=1}^{m_0}\frac{L_m+1}{2}L_m\right),\text{ and}\quad
	|Q_2(c)|=|\tilde Q(c)|\E_{\mu_c}\left(\sum_{m=1}^{m_0}L_m\right).
\end{align*}
Noting that, for all $n\in\N$ and $K_0\in \R$, $(x_i)_{i\in[n]_0}\mapsto \sum_{i=0}^n\frac{x_i^2}{2}$ on $\sum_{i=0}^nx_i=K_0$ attains its minimum at $(x_i)_{i\in[n]_0}=\frac{K_0}{n+1}(1)_{i\in[n]_0}$ and that, for $\tilde c\in Q_1(c)|_{A(1,\infty)}$, $m_0\leq 2K-2$, we get
\begin{align*}
	\E_{\mu_c}\left(\sum_{m=1}^{m_0}\frac{L_m+1}{2}L_m\right)&\geq \frac 1 2\E_{\mu_c}\left(m_0\left(\frac{\sum_{m=1}^{m_0}L_m}{m_0}\right)^2\right)
	\geq \frac{1}{4K-2}\E_{\mu_c}\left(\sum_{m=1}^{m_0}L_m\right)^2
\end{align*}
where we have used Jensen's inequality. Hence, \eqref{combinatorics} follows by
\begin{align*}
	\E_{\mu_c}\left(\sum_{m=1}^{m_0}L_m\right)\geq (4K-2)\frac{|c|+2+2|E_{\rm block}|}{\beta u^2}\hat\theta d_0
\end{align*}
as a consequence of Lemma \ref{lem-estimatefreespots}.
\end{proof}
\begin{proof}[Proof of Theorem \ref{appearanceblocking}]
	We need to show \eqref{eq: uniformDom}. We use Lemmata \ref{c vs Ex(c)} and \ref{secondStepLemma} to get for all $\bar c \in \caC(A(2,\infty))$ and $\bar \epsilon \in \{0,1\}^{A(1,2)}$
	\begin{align*}
		\tilde \P(\{B_{e_0}=1\}\cap \caC_{\bar c, \bar \eps})\geq \tilde \P(\{B_{e_0}=1\}\cap \caC^\text{min}_{\bar c, \bar \eps})\geq \delta_0\tilde \P( \caC^\text{min}_{\bar c, \bar \eps})\geq \delta_0\e{-\beta^+(2K-2)}\tilde\P(\caC_{\bar c, \bar \eps})
	\end{align*}
	where we have summed inequality \eqref{Cmin inequality} over all $c\in\caC^\text{min}_{\bar c, \bar \eps}$. Let $\tilde\epsilon\in\{0,1\}^{E\setminus\{e_0\}}$ be arbitrary and set $\bar \epsilon\coloneqq \tilde\epsilon|_{A(1,2)}$. By summing the estimate over all the $\bar c$ such that, for all $e^\prime\in A(3,\infty)$, we have $B_{e^\prime}(\bar c)=\tilde\epsilon_{e^\prime}$, we conclude
	\begin{align*}
			\tilde \P(B_{e_0}=1,~(B_{e'})_{e' \in E \setminus \{e\}} = \tilde\epsilon)\geq \delta_0\e{-\beta^+(2K-2)}\tilde \P((B_{e'})_{e' \in E \setminus \{e\}} = \tilde\epsilon)
	\end{align*}	
	which yields the claim.
\end{proof}
			\appendix
			\section{Stochastic domination}
			This result provides stochastic domination in the case of local domination making it possible to find a coupling between the locally dominating and dominated measure. We will consider measures on $\{0,1\}^I$ for some set $I$ at most countable. For simplification, we identify $I$ with $[N]$ for some $N\in\N\cup\{\infty\}$ with $[\infty]\coloneqq \N$. A similar statement with a slightly different assumption to the one of the following proposition can be found in \cite{Liggett1997}.
			\begin{proposition}\label{stochasticdominancecomp}
				Let $X$ and $Y$ be $\{0,1\}^I$-valued a random variables on such that for every finite set $J\subseteq I$, all $(\epsilon_j)_{j\in J}\subseteq\{0,1\}$ and every $i\in I\backslash J$, we have
				\begin{align*}
					\P\left(X_i=1\Bigg|\forall j\in J:~X_j=\epsilon_j\right)\geq \P\left(Y_i=1\Bigg|\forall j\in J:~Y_j=\epsilon_j\right).
				\end{align*}
				Then $Y$ is stochastically dominated by $X$.
			\end{proposition}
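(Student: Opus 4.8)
The natural route is a sequential, coordinate-by-coordinate coupling in the spirit of Strassen's theorem: I would construct on one probability space a pair $(\hat X,\hat Y)$ with $\hat X\sim X$, $\hat Y\sim Y$ and $\hat X_i\geq \hat Y_i$ for every $i$, since the existence of such a monotone coupling is equivalent to the asserted domination $Y\stodom X$. First I would reduce to finite $I$. Every increasing event on $\{0,1\}^I$ is a monotone limit of increasing events depending on finitely many coordinates, so by monotone convergence it suffices to dominate all finite-dimensional marginals. Moreover the hypothesis is inherited by the restriction of $X$ and $Y$ to any finite $J\subseteq I$: for $J'\subseteq J$ the conditional law $\P(X_i=1\mid X_{J'}=\epsilon)$ of the restricted process equals that of the full process, which is exactly the quantity controlled by the assumption. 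Hence it is enough to build the coupling for $I=\{1,\dots,n\}$.

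For finite $I$ I would take i.i.d.\ uniforms $U_1,\dots,U_n$ on $[0,1]$ and define $\hat X,\hat Y$ recursively: having fixed $\hat X_{<i}$ and $\hat Y_{<i}$, set $\hat X_i=\mathds{1}\{U_i\leq a_i\}$ and $\hat Y_i=\mathds{1}\{U_i\leq b_i\}$, where $a_i=\P(X_i=1\mid X_{<i}=\hat X_{<i})$ and $b_i=\P(Y_i=1\mid Y_{<i}=\hat Y_{<i})$ (histories of probability zero are irrelevant and may be handled by an arbitrary convention). The chain rule then yields $\hat X\sim X$ and $\hat Y\sim Y$ immediately, so that the marginals are correct by construction. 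Because both coordinates are driven by the \emph{same} $U_i$, the pointwise order $\hat X_i\geq\hat Y_i$ is equivalent to the single inequality $a_i\geq b_i$, and this is the only point that needs an argument.

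The heart of the matter, and the step I expect to be the main obstacle, is proving $a_i\geq b_i$ under the inductive assumption $\hat X_{<i}\geq \hat Y_{<i}$. Applying the hypothesis with $J=\{1,\dots,i-1\}$ and $\epsilon=\hat X_{<i}$ gives $a_i=\P(X_i=1\mid X_{<i}=\hat X_{<i})\geq \P(Y_i=1\mid Y_{<i}=\hat X_{<i})$; that is, the assumption compares the two laws only at the \emph{same} conditioning configuration. Closing the gap to $b_i=\P(Y_i=1\mid Y_{<i}=\hat Y_{<i})$ therefore requires controlling how the conditional probabilities of the dominated law $Y$ change as the conditioning is lowered from $\hat X_{<i}$ to $\hat Y_{<i}$. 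In the only case needed for Theorem~\ref{appearanceblocking}, where the dominated measure is the Bernoulli product $\bper{\delta}$, this obstacle evaporates: $\P(Y_i=1\mid Y_{<i}=\xi)=\delta$ for every $\xi$, so the hypothesis delivers $a_i\geq\delta=b_i$ at once and the induction closes. For a fully general dominated law one would additionally need monotonicity of $\xi\mapsto\P(Y_i=1\mid Y_{<i}=\xi)$ (equivalently the ``crossed'' form of the conditional comparison, with $\xi\geq\zeta$ rather than $\xi=\zeta$), which is precisely the ingredient that lets the order propagate along the recursion.

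Finally I would return from finite $I$ to the countable case. The monotone couplings of the finite marginals give $\P(Y\in A)\leq\P(X\in A)$ for every increasing event $A$ depending on finitely many coordinates, and the monotone approximation of arbitrary increasing events by such finite-dimensional ones upgrades this to $Y\stodom X$ on all of $\{0,1\}^I$, completing the proof.
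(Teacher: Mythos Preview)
Your route is the paper's route: both build a sequential, coordinate-by-coordinate monotone coupling (the paper via explicit transition kernels $m_k$ and Kolmogorov extension, you via common uniforms), and both enforce the order by forbidding the pair $(0,1)$ at each step.

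You have, however, been more scrupulous than the paper at the decisive step. The obstacle you isolate --- that the hypothesis compares the two laws only at the \emph{same} conditioning $\epsilon$, whereas the coupling needs the crossed inequality $\P(X_i=1\mid X_{<i}=\xi)\geq\P(Y_i=1\mid Y_{<i}=\zeta)$ for $\xi\geq\zeta$ --- is precisely what the paper's ``straightforward calculation'' glosses over. In the kernels $m_k$ as written, the $Y$-conditional is evaluated at the first-coordinate history $\epsilon_{<k}$; with that choice $m_k\geq 0$ does follow from the stated assumption, but then the conditional law of the second component at step $k$ depends on $\epsilon_{<k}$ rather than on its own past $\tilde\epsilon_{<k}$, so the second marginal is not $Y$ in general. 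If one instead evaluates the $Y$-conditional at $\tilde\epsilon_{<k}$, the marginals come out right, but $m_k(\cdot,(1,0))\geq 0$ becomes exactly the crossed inequality that is not assumed. Either reading leaves the same gap you identified, and the general statement indeed needs the extra monotonicity of the dominated law that you name. Your observation that the difficulty evaporates when $Y$ is a Bernoulli product measure is correct, and since that is the only instance invoked in the paper (in Lemma~\ref{reductiontoinequality}, with $Y=\bper{\delta}$), both your argument and the paper's are complete for the application.
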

			\begin{proof}
				We define functions $(m_k)_{k\in\N}$ by
				\begin{align*}
					m_k((\epsilon_j,\tilde\epsilon_j)_{j\in[k]})\coloneqq\begin{cases*}
						0 \text{ if }(\epsilon_k,\tilde\epsilon_k)=(0,1),\\
						\P\left(X_k=1\big|\forall j\leq k-1:~X_j=\epsilon_j\right)-\P\left(Y_k=1\big|\forall j\leq k-1:~Y_j=\epsilon_j\right) \\
						~~~~~~~~~~~~~~~~~~~~~~~~~~~~~~~~~~~~~~~~\text{ if }(\epsilon_k,\tilde\epsilon_k)=(1,0),\\
						\P\left(Y_k=1\big|\forall j\leq k-1:~Y_j=\epsilon_j\right) \text{ if }(\epsilon_k,\tilde\epsilon_k)=(1,1),\\
						1-\P\left(Y_k=1\big|\forall j\leq k-1:~Y_j=\epsilon_j\right) \text{ if }(\epsilon_k,\tilde\epsilon_k)=(0,0).
					\end{cases*}
				\end{align*}
				Using Kolmogorovs extension theorem, we find a random variable $Z$ on $\{0,1\}^I\times \{0,1\}^I$ such that for all $k\in\N$ and $(\epsilon_j,\tilde\epsilon_j)_{j\in[k]}$
				\begin{align*}
					\P(Z=(\epsilon_j,\tilde\epsilon_j)_{j\in[k]})=\prod_{l=1}^km_{l}((\epsilon_j,\tilde\epsilon_j)_{j\in[l]}).
				\end{align*}
				A straightforward calculation shows $Z(\cdot,\{0,1\}^I)\overset{d}{=}X(\cdot)$ and $Z(\{0,1\}^I,\cdot)\overset{d}{=}Y(\cdot)$. By the definition of $Z$, we have $Z\in\{(a_i,b_i)_{i\in I}\in\{0,1\}^I:~a_i\geq b_i\}$ $\P$-a.s.. This implies that $Y$ is stochastically dominated by $X$.
			\end{proof}
			
				\subsection*{Acknowledgements}
			Andreas Klippel thanks the Casanuswerk for supporting his PhD studies. 
			Mino Nicola Kraft thanks the Studienstiftung for supporting his PhD studies.

			\nocite{*}
			\bibliographystyle{alpha}
			\bibliography{bibliography (1)}
		\end{document}